\theoremstyle{definition}
\newtheorem{definition}{Definition}
\newtheorem{theorem}[definition]{Theorem}
\newtheorem{proposition}[definition]{Proposition}
\newtheorem{lemma}[definition]{Lemma}
\theoremstyle{remark}
\newtheorem{remark}[definition]{Remark}
\newcounter{enumctr}
\newcommand{\R}{\mathbb{R}}
\newcommand{\C}{\mathbb{C}}
\newcommand{\id}{\hbox{id}}
\renewcommand{\phi}{\varphi}
\newcommand{\rT}{\mathrm {T}}
\begin{document}
\title{\vspace*{-10mm}
On stable manifolds for fractional differential equations in high dimensional spaces}
\author{
N.D.~Cong\footnote{\tt ndcong@math.ac.vn, \rm Institute of Mathematics, Vietnam Academy of Science and Technology, 18 Hoang Quoc Viet, 10307 Ha Noi, Viet Nam},
T.S.~Doan\footnote{\tt dtson@math.ac.vn, \rm Institute of Mathematics, Vietnam Academy of Science and Technology, 18 Hoang Quoc Viet, 10307 Ha Noi, Viet Nam and Department of Mathematics, Hokkaido University, Japan},
S.~Siegmund\footnote{\tt stefan.siegmund@tu-dresden.de, \rm Center for Dynamics, Department of Mathematics, Technische Universit\"{a}t Dresden, Zellescher Weg 12-14, 01069 Dresden, Germany}
\;and\;
H.T.~Tuan\footnote{\tt httuan@math.ac.vn, \rm Institute of Mathematics, Vietnam Academy of Science and Technology, 18 Hoang Quoc Viet, 10307 Ha Noi, Viet Nam}}
\date{}
\maketitle
\begin{abstract}
Our aim in this paper is to establish stable manifolds near hyperbolic equilibria of fractional differential equations in arbitrary finite dimensional spaces.
\end{abstract}
\section{Introduction}
In recent years, fractional differential equations have attracted increasing interest due to
the fact that many mathematical problems in science and engineering can be modeled by fractional differential equations, see e.g.\ \cite{Kai,Kilbas,Podlubny}.

One of the biggest challenges in fractional differential equations is to understand the asymptotic behavior of fractional differential equations.  Although, there have been some publications on this topic such as stability theory, see e.g.\ \cite{Deng,LiChen} and \cite{LiZhang} for a survey of this topic, linear theory \cite{Bonilla, Odibat}, Lyapunov exponents \cite{LiGong,Cong_FCAA,CongSonSiegmundTuan} and linearization theory \cite{LiMa}, the development of a qualitative theory for fractional differential equations is still in its infancy. One of the reasons for this fact might be that general nonlinear fractional differential equations do not generate semigroups and the well-developed qualitative theory for dynamical systems cannot be applied directly.

The first step towards developing a stable manifold theorem for hyperbolic equilibria of fractional differential equations was done in \cite{Cong_2014}. In that paper, the authors consider a planar differential equation having zero as a steady state and the linear part of the linearization along the zero solution has one negative and one positive eigenvalue.  For this class of systems, an adequate Lyapunov--Perron operator is constructed and the graph of the stable manifold is characterized as the fixed point of this operator.

Our aim in this paper is to extend the result in \cite{Cong_2014} to fractional differential equations in an arbitrary finite dimensional space. The main difficulty arosen in this work is to introduce a suitable Lyapunov--Perron operator and show the contraction of this operator in an appropriate sense. To overcome this challenge, we first transform the linear part into a matrix which is very close to the diagonal one. Taking this approximated diagonal matrix as the linear part of the new system, we can use the variation of constants formula to define a Lyapunov--Perron operator. By this indirect way of construction, the Lyapunov--Perron operator can be estimated and enables us to prove the existence of the stable manifold.

A.\ Deshpande and V.\ Daftardar-Gejji~\cite{Deshpande} recently published a paper in which they formulated a stable manifold theorem similar to the one in this paper and the earlier reference \cite{CongSonSiegmundTuan1}. However, the proof of the stable manifold theorem in \cite{Deshpande} is incorrect and there are serious flaws in their approach to the problem of stable manifolds of fractional systems which we explain in detail and by counter-example in this paper.

The paper is organized as follows: in Section~\ref{sec.priliminaries}, we recall some fundamental results on fractional calculus and fractional differential equations. Section~\ref{sec.stab.mani} is devoted to the main result of this paper about stable manifolds for fractional differential equations in high dimensional spaces. Section~\ref{sec.discusstion} is devoted to a discussion about the work of A.\ Deshpande and V.\ Daftardar-Gejji~\cite{Deshpande} mentioned above.

To conclude this introductory section, we introduce notation which is used throughout this paper. Let $\R^d$ be endowed with the max norm, i.e.\ $\|x\|=\max(|x_1|,\dots,|x_d|)$ for all $x=(x_1,\dots,x_d)^{\rT}\in\R^d$. For $r>0$, let $B_{\R^d}(0,r):=\{x\in\R^d: \|x\|\leq r\}$ and for a Lipschitz continuous function $f:\R^d\rightarrow \R^d$, we define
\[
\ell_f(r):=\sup_{x,y\in B_{\R^d}(0,r)}\frac{\|f(x)-f(y)\|}{\|x-y\|}.
\]
Let $\R_{\geq 0}$ denote the set of all nonnegative real numbers and $\left(C_\infty(\R^d),\|\cdot\|_\infty\right)$ denote the space of all continuous functions $\xi:\R_{\geq 0}\rightarrow \R^d$ such that
\[
\|\xi\|_\infty:=\sup_{t\in \R_{\geq 0}}\|\xi(t)\|<\infty.
\]
It is well known that $\left(C_\infty(\R^d),\|\cdot\|_\infty\right)$ is a Banach space.
\section{Preliminaries}\label{sec.priliminaries}
\subsection{Fractional diferential equations}

%
We start this section by briefly recalling an abstract framework of fractional calculus and the corresponding fractional differential equations. We refer the reader to the books \cite{Kai,Kilbas} for more details about the theory of fractional differential equations.


Let $\alpha>0$ and $[a,b]\subset \R$. Let $f:[a,b]\rightarrow \R$ be a measurable function such that $f\in L_1([a,b])$, i.e.\ $\int_a^b|f(s)|\;ds<\infty$. Then, the \emph{Riemann-Liouville integral operator of order $\alpha$} is defined by
\[
(I_{a+}^{\alpha}f)(x):=\frac{1}{\Gamma(\alpha)}\int_a^x(x-t)^{\alpha-1}f(t)\;dt\quad \hbox{ for } x>a,
\]
where the \emph{Euler Gamma function } $\Gamma:(0,\infty)\rightarrow \R$ is defined as
\[
\Gamma(\alpha):=\int_0^\infty t^{\alpha-1}\exp(-t)\;dt,
\]
see e.g.,\ \cite{Kai}. The corresponding \emph{Riemann-Liouville fractional derivative} is given by
\[
(D_{a+}^\alpha f)(x):=(D^mI_{a+}^{m-\alpha}f)(x),
\]
where $D=\frac{d}{dx}$ is the usual derivative and $m:=\lceil\alpha\rceil$ is the smallest integer larger or equal $\alpha$. On the other hand, the \emph{Caputo fractional derivative } $^{C}D_{a+}^\alpha f$ of a function $f\in C^m([a,b])$, which was introduced by Caputo (see e.g.,\ \cite{Kai}), is defined by
\[
(^{C}D_{a+}^\alpha f)(x):=(I_{a+}^{m-\alpha}D^mf)(x),\qquad \hbox{ for } x>a.
\]
The Caputo fractional derivative of a $d$-dimensional vector function $f(t)=(f_1(t),\dots,f_d(t))^{\rT}$ is defined component-wise as $$^{C}D^\alpha_{0+}f(t)=(^{C}D^\alpha_{0+}f_1(t),\dots,^{C}D^\alpha_{0+}f_d(t))^{\rT}.$$
We refer the reader to \cite[Chapters 2 and 3]{Kai} for a discussion on some advantages of the Caputo derivative in comparison to the Riemann--Liouville derivative. In this paper, we consider nonlinear fractional differential equations involving the Caputo fractional derivative of the following form
\begin{equation}\label{Eq1a}
^{C}D_{0+}^\alpha x(t)=Ax(t)+f(x(t)),
\end{equation}
where $\alpha\in (0,1)$, $A\in\R^{d\times d}$ and $f:\R^d\rightarrow \R^d$ is continuous on $\R^d$. Assume that for any initial value $x\in\R^d$, the initial value problem \eqref{Eq1} has a unique solution denoted by $\phi(\cdot,x)$ which is defined on the whole interval $\R_{\geq 0}$. This is for instance the case if $f$ is globally Lipschitz, see e.g., \cite[Theorem 3.1]{Lin}.

For $f=0$, system \eqref{Eq1a} reduces to a linear time-invariant fractional differential equation
\begin{equation}\label{Eq2}
^{C}D_{0+}^\alpha x(t)=Ax(t).
\end{equation}
As shown in \cite{Bonilla}, $E_{\alpha}(t^{\alpha}A)x$ solves \eqref{Eq2} with the initial condition $x(0)=x$, where the \emph{Mittag-Leffler matrix function} $E_{\alpha}(A)$ for a matrix $A\in\R^{d\times d}$ is defined as
\[
E_{\alpha,\beta}(A):=\sum_{k=0}^\infty \frac{A^k}{\Gamma(\alpha k+\beta)},\qquad E_{\alpha}(A):=E_{\alpha,1}(A),
\]
where $\beta \in \C$.

If the nonlinear term $f$ does not vanish, it is in general impossible to provide an explicit form of the solution of \eqref{Eq1a}. However, we get a presentation of solutions for \eqref{Eq1a} by using the Mittag-Leffler matrix function. We refer the reader to \cite{Bonilla,Kilbas,LiPeng} for a proof of this result.
\begin{theorem}[Variation of constants formula for fractional differential equations]\label{Var_Const_Form}
The solution $\phi(\cdot,x)$ of \eqref{Eq1a}, $x(0) = x$, satisfies for $t\in\R_{\geq 0}$
\begin{equation}\label{Eq4}
\phi(t,x)=E_{\alpha}(t^{\alpha}A)x+\int_0^t (t-s)^{\alpha-1}E_{\alpha,\alpha}((t-s)^{\alpha}A)f(\phi(s,x))\;ds.
\end{equation}
\end{theorem}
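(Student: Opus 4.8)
The plan is to treat the nonlinearity as a known forcing term and thereby reduce \eqref{Eq1a} to an inhomogeneous \emph{linear} Caputo equation, for which the formula can be checked directly. Since $\phi(\cdot,x)$ is by assumption a continuous solution and $f$ is continuous, the function $g(s):=f(\phi(s,x))$ is continuous on $\R_{\geq 0}$, and $\phi(\cdot,x)$ solves
\[
{}^{C}D_{0+}^\alpha y(t)=Ay(t)+g(t),\qquad y(0)=x.
\]
First I would pass to the equivalent Volterra integral equation: applying $I_{0+}^\alpha$ to both sides and using the standard identity $I_{0+}^\alpha\,{}^{C}D_{0+}^\alpha y=y-y(0)$ (the equivalence of the Caputo problem with its associated Volterra equation, see \cite{Kai}), this initial value problem becomes
\[
y(t)=x+\frac{1}{\Gamma(\alpha)}\int_0^t(t-s)^{\alpha-1}\bigl(Ay(s)+g(s)\bigr)\,ds=x+\bigl(I_{0+}^\alpha(Ay+g)\bigr)(t).
\]
It then suffices to verify that the right-hand side of \eqref{Eq4}, call it $\psi(t)$, satisfies this integral equation; the assumed uniqueness of the solution forces $\psi=\phi(\cdot,x)$.

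The verification splits into a homogeneous and an inhomogeneous part. For the homogeneous part I would expand $E_\alpha(t^\alpha A)x=\sum_{k\geq 0}\frac{t^{\alpha k}}{\Gamma(\alpha k+1)}A^kx$, integrate term by term using the power rule $(I_{0+}^\alpha\tau^{\beta})(t)=\frac{\Gamma(\beta+1)}{\Gamma(\beta+\alpha+1)}t^{\beta+\alpha}$, and reindex; the shift $k\mapsto k+1$ produces exactly $E_\alpha(t^\alpha A)x-x$, so that $x+I_{0+}^\alpha\bigl(AE_\alpha(t^\alpha A)x\bigr)=E_\alpha(t^\alpha A)x$, matching the first term of \eqref{Eq4}.

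For the convolution term, write $K(\tau):=\tau^{\alpha-1}E_{\alpha,\alpha}(\tau^\alpha A)=\sum_{k\geq 0}\frac{\tau^{\alpha k+\alpha-1}}{\Gamma(\alpha k+\alpha)}A^k$, so the integral in \eqref{Eq4} is the convolution $(K*g)(t)$. The heart of the argument is the kernel identity
\[
K=\frac{\tau^{\alpha-1}}{\Gamma(\alpha)}\,I+A\,\bigl(I_{0+}^\alpha K\bigr),
\]
which I would again obtain by term-by-term fractional integration and the shift $k\mapsto k+1$: the term $k=0$ reproduces $\frac{\tau^{\alpha-1}}{\Gamma(\alpha)}I$, while $A\,I_{0+}^\alpha K$ reproduces all terms with $k\geq 1$. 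Convolving this identity against $g$ and using $I_{0+}^\alpha(K*g)=(I_{0+}^\alpha K)*g$ gives $(K*g)=I_{0+}^\alpha\bigl(A(K*g)+g\bigr)$, i.e.\ the inhomogeneous part of $\psi$ also satisfies the Volterra equation. Adding the two parts yields $\psi(t)=x+I_{0+}^\alpha(A\psi+g)(t)$, as required.

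The step I expect to require the most care is justifying the interchange of the infinite Mittag-Leffler sums with the fractional integrals. This is where the entirety of $E_{\alpha,\beta}$ enters: on any compact interval $[0,T]$ the series for $E_\alpha(t^\alpha A)$ and $K(\tau)$ converge absolutely and uniformly (after factoring out the integrable singularity $\tau^{\alpha-1}$ in the latter), with a rate controlled by $\|A\|$ and $\Gamma(\alpha k+\beta)^{-1}$, so dominated convergence legitimizes the term-by-term operations. An alternative route that sidesteps these manipulations is to take Laplace transforms, using $\mathcal{L}\{{}^{C}D_{0+}^\alpha y\}(s)=s^\alpha Y(s)-s^{\alpha-1}x$ together with $\mathcal{L}\{E_\alpha(t^\alpha A)\}(s)=s^{\alpha-1}(s^\alpha I-A)^{-1}$ and $\mathcal{L}\{K\}(s)=(s^\alpha I-A)^{-1}$, and inverting; this is shorter but shifts the burden onto justifying transformability and the invertibility of $s^\alpha I-A$ for large $\mathrm{Re}\,s$.
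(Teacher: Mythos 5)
The paper does not actually prove Theorem \ref{Var_Const_Form}; it only refers the reader to \cite{Bonilla,Kilbas,LiPeng}, so your argument must stand on its own. Its computational core does: reducing \eqref{Eq1a} to a linear inhomogeneous Caputo equation with known forcing $g(s):=f(\phi(s,x))$, passing to the Volterra equation $y=x+I_{0+}^{\alpha}(Ay+g)$, and then verifying that the right-hand side $\psi$ of \eqref{Eq4} solves it. The term-by-term fractional integration giving $x+I_{0+}^{\alpha}\bigl(AE_{\alpha}(\cdot^{\alpha}A)x\bigr)(t)=E_{\alpha}(t^{\alpha}A)x$, the kernel identity $K(\tau)=\frac{\tau^{\alpha-1}}{\Gamma(\alpha)}I+A(I_{0+}^{\alpha}K)(\tau)$, and the convolution associativity $I_{0+}^{\alpha}(K*g)=(I_{0+}^{\alpha}K)*g$ are all correct, and the locally uniform convergence of the Mittag-Leffler series (after factoring out the integrable singularity $\tau^{\alpha-1}$) does justify the interchanges.

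The one genuine gap is the final identification step: ``the assumed uniqueness of the solution forces $\psi=\phi(\cdot,x)$.'' The uniqueness assumed in the paper is uniqueness for the \emph{nonlinear} problem \eqref{Eq1a}; to invoke it you would need to know that $\psi$ solves the nonlinear equation, i.e.\ that $f(\psi(s))=g(s)=f(\phi(s,x))$ for all $s$, which presupposes exactly the conclusion $\psi=\phi(\cdot,x)$ — the appeal is circular as written. What you actually need is uniqueness for the \emph{linear} Volterra equation with the fixed forcing $g$, which is not assumed anywhere but is standard and provable in two lines: the difference $u:=\phi(\cdot,x)-\psi$ satisfies $u=A\,I_{0+}^{\alpha}u$, hence
\[
\|u(t)\|\leq \frac{\|A\|}{\Gamma(\alpha)}\int_0^t (t-s)^{\alpha-1}\|u(s)\|\,ds ,
\]
and iterating this inequality $n$ times gives, on any compact interval $[0,T]$,
\[
\sup_{t\in[0,T]}\|u(t)\|\leq \frac{\|A\|^{n}T^{n\alpha}}{\Gamma(n\alpha+1)}\sup_{t\in[0,T]}\|u(t)\|\xrightarrow[n\to\infty]{}0 ,
\]
so $u\equiv 0$ (equivalently, apply the weakly singular Gronwall inequality). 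With this one-line repair your proof is complete. The same caveat applies, more severely, to your Laplace-transform alternative: there you would additionally need $\phi(\cdot,x)$ to be of exponential order, which is not known a priori, so the Volterra--Gronwall route you chose as the main line is the right one.
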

\subsection{Mittag-Leffler function}\label{MF_Function}
This subsection is devoted to an introduction of the so-called Mittag-Leffler function and its basic properties. Analog to the exponential function frequently used in the investigation of integer-order systems, this function naturally occurs as the solution of fractional differential equations. For $\alpha \in (0,1)$, $\beta \in \C$ the \emph{Mittag-Leffler function} $E_{\alpha,\beta}:\C\rightarrow \C$ is defined as
\[
E_{\alpha,\beta}(z):=\sum_{k=0}^\infty \frac{z^k}{\Gamma(\alpha k+\beta)},\qquad E_{\alpha}(z):=E_{\alpha,1}(z).
\]
For a complex number $z$ we denote by $\text{arg}(z)$ its argument, which we define to take value in the interval $(-\pi,\pi]$, i.e., $-\pi<\text{arg}(z)\leq \pi$ for all $z\in\C\setminus\{0\}$.
\begin{lemma}[Asymptotic expansion of Mittag-Leffler function]\label{lemma1}
Let $0<\alpha<1$, $\beta \in \C$ and a real number $\mu$ such that $\frac{\alpha \pi}{2}<\mu <\alpha \pi$.
\begin{itemize}
\item [(i)] If $|\text{arg}(z)|\leq \mu$, then for an arbitrary integer $p\geq 1$ the following expansion holds as $|z|\to \infty$
\[
E_{\alpha,\beta}(z)= \frac{1}{\alpha}z^{\frac{1-\beta}{\alpha}}\exp{(z^{\frac{1}{\alpha}})}-\sum_{k=1}^p\frac{z^{-k}}{\Gamma(\beta-\alpha k)}+\mathcal{O}(|z|^{-1-p}).
\]
\item [(ii)] If $\mu \leq |\text{arg}(z)|\leq \pi$, then for an arbitrary integer $p\geq 1$ the following expansion holds as $|z|\to \infty$
\[
E_{\alpha,\beta}(z)= -\sum_{k=1}^p \frac{z^{-k}}{\Gamma(\beta-\alpha k)}+\mathcal{O}(|z|^{-1-p}).
\]
\end{itemize}
\end{lemma}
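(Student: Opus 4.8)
The plan is to derive the expansion from a Hankel-type contour integral representation of $E_{\alpha,\beta}$ rather than from the defining series directly. First I would insert the classical Hankel integral for the reciprocal Gamma function, $\frac{1}{\Gamma(s)}=\frac{1}{2\pi i}\int_{\gamma}e^{t}t^{-s}\,dt$ with $s=\alpha k+\beta$, where $\gamma$ runs in from $-\infty$ below the negative real axis, loops around the origin, and returns to $-\infty$ above it. Substituting into $E_{\alpha,\beta}(z)=\sum_{k\ge0}z^{k}/\Gamma(\alpha k+\beta)$ and interchanging sum and integral---legitimate once $\gamma$ is taken to enclose the circle $|t|=|z|^{1/\alpha}$, so that $\sum_{k\ge0}(z t^{-\alpha})^{k}$ converges on it---collapses the geometric series and produces
\[
E_{\alpha,\beta}(z)=\frac{1}{2\pi i}\int_{\gamma}\frac{e^{t}\,t^{\alpha-\beta}}{t^{\alpha}-z}\,dt.
\]

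I would then pass to $\zeta=t^{\alpha}$, turning this into $E_{\alpha,\beta}(z)=\frac{1}{2\pi i\alpha}\int_{C}\frac{\zeta^{(1-\beta)/\alpha}e^{\zeta^{1/\alpha}}}{\zeta-z}\,d\zeta$, where $C=C(\mu)$ is a contour built from two rays $\arg\zeta=\pm\mu$ closed off by a circular arc. The hypothesis $\frac{\alpha\pi}{2}<\mu<\alpha\pi$ is precisely what makes this contour usable: since $\Re(\zeta^{1/\alpha})=|\zeta|^{1/\alpha}\cos(\mu/\alpha)$ on the rays, the bound $\mu>\frac{\alpha\pi}{2}$ forces $\cos(\mu/\alpha)<0$ and hence exponential decay of the integrand along the rays, while $\mu<\alpha\pi$ keeps $\arg(\zeta^{1/\alpha})$ strictly inside $(-\pi,\pi)$ so that the fractional powers and the exponential remain single-valued. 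The kernel has its only pole at $\zeta=z$, and whether deforming $C$ sweeps across this pole is dictated by the position of $z$ relative to the sector $|\arg\zeta|<\mu$.

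Splitting into the two cases then gives the dichotomy of the lemma. In case (i), $|\arg z|\le\mu$ places $z$ in the sector enclosed by $C$, so the deformation picks up the residue of the kernel at $\zeta=z$, equal to $\frac{1}{\alpha}z^{(1-\beta)/\alpha}e^{z^{1/\alpha}}$, which is the leading term; in case (ii), $\mu\le|\arg z|\le\pi$ leaves $z$ outside the contour and no residue is collected, which is why the exponential term disappears. In both cases the remaining integral over $C$ supplies the algebraic part: writing $\frac{1}{\zeta-z}=-\sum_{k=1}^{p}\frac{\zeta^{k-1}}{z^{k}}-\frac{\zeta^{p}}{z^{p}(\zeta-z)}$ and integrating term by term, each finite term reproduces $-\frac{z^{-k}}{\Gamma(\beta-\alpha k)}$---by running the Hankel formula backwards after returning to the variable $t$---while the tail integral, controlled by the exponential decay on the rays together with a lower bound $|\zeta-z|\ge c\,|z|$ valid on $C$ for some $c>0$, is $\mathcal{O}(|z|^{-1-p})$ as $|z|\to\infty$.

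I expect the main obstacle to be the contour-deformation and remainder-estimation step rather than the formal algebra. One must check, uniformly in $\arg z$ over each closed sector, that the pole at $\zeta=z$ is correctly included or excluded, that $|\zeta-z|$ stays bounded below by a fixed multiple of $|z|$ along $C$ so the truncated geometric expansion is legitimate, and that the tail bound is genuinely $\mathcal{O}(|z|^{-1-p})$ uniformly as $|z|\to\infty$. The delicate situation is the boundary $|\arg z|=\mu$, where $z$ sits essentially on $C$; resolving it requires a small local deformation of the contour and is exactly where the freedom in choosing $\mu$ within $(\frac{\alpha\pi}{2},\alpha\pi)$ is used.
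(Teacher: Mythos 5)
Your sketch reconstructs the standard contour-integral proof---Hankel representation of $1/\Gamma$, collapse of the series to the kernel $\frac{1}{2\pi i\alpha}\int_C\frac{e^{\zeta^{1/\alpha}}\zeta^{(1-\beta)/\alpha}}{\zeta-z}\,d\zeta$, residue dichotomy according to whether $z$ lies inside the sector $|\arg\zeta|\le\mu$, and truncated geometric expansion for the algebraic terms---which is precisely the proof of Theorems 1.3 and 1.4 in Podlubny's book, the reference to which the paper's entire proof of this lemma consists. So your route is essentially the same as the paper's (via its citation) and is correct, up to a harmless sign slip in the remainder of the geometric expansion, which should read $\frac{1}{\zeta-z}=-\sum_{k=1}^{p}\frac{\zeta^{k-1}}{z^{k}}+\frac{\zeta^{p}}{z^{p}(\zeta-z)}$.
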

\begin{proof}
See \cite[Theorem 1.3, p.\ 32]{Podlubny} and \cite[Theorem 1.4, pp.~33--34]{Podlubny}.
\end{proof}
\section{Stable manifolds for fractional differential equations}\label{sec.stab.mani}
%
Consider nonlinear fractional differential equations of the following form
\begin{equation}\label{Eq1}
^{C}D_{0+}^{\alpha}x(t)=Ax(t)+f(x(t)),
\end{equation}
where $A\in \R^{d\times d}$ and $f:\R^d\rightarrow \R^d$ is continuous on $\R^d$ and Lipschitz continuous in a neighborhood of the origin satisfying \begin{equation}\label{Lipschitz condition}
f(0)=0\quad\hbox{and}\quad\lim_{r\to 0} \ell_f(r)=0.
\end{equation}
Note that $f$ fulfills condition \eqref{Lipschitz condition} provided that $f$ is $C^1$ in a neighborhood of the origin with $f(0)=0$ and $Df(0)=0$. Furthermore, we suppose the following hyperbolicity assumption on $A$ as follows.

\textbf{Hyperbolicity}: Let $\{\hat\lambda_1,\dots,\hat\lambda_m\}$ be the collection of all distinct (complex) eigenvalues of $A$. We say that $A$ satisfies the hyperbolicity assumption if 
\begin{equation}\label{Ordering}
|\hbox{arg}(\hat\lambda_i)| \not= \frac{\alpha\pi}{2} \quad\hbox{for all}\quad i=1,\ldots,m.
\end{equation}

We recall the following definition of local stable manifold for fractional differential equations, cf.\ \cite[Definition 4]{Cong_2014}.
\begin{definition} Let $U\subseteq \R^d$ be an arbitrary neighborhood of $0$. The \emph{stable manifold} of \eqref{Eq1} in $U$ is defined as
\[
\mathcal W^s(U):=\Big\{x\in U: \phi(t,x)\in U \hbox{ for } t\in\R_{\geq 0}\hbox{ and } \lim_{t\to\infty} \phi(t,x)=0\Big\}.
\]
\end{definition}
Before going into the details of the construction of the stable manifold, we briefly discuss the structure of this section:
As in \cite{Cong_2014}, we plan to characterize the graph of the stable manifold of \eqref{Eq1} by the fixed point of a suitable Lyapunov--Perron operator. In light of the variation of constants formula in Theorem \ref{Var_Const_Form}, we are able to construct a Lyapunov--Perron operator associated with \eqref{Eq1} involving the Mittag-Leffler matrix functions $E_{\alpha}(t^\alpha A)$. More precisely, let $A=\hbox{diag}(A^u,A^s)$, where $A^u\in\R^{d^u\times d^u}$ and $A^s\in\R^{d^s\times d^s}$ are the unstable and stable part of $A$, respectively. Then, for each $x^s\in \R^{d^s}$ the Lyapunov--Perron operator $\mathcal{LP}_{x^s}:C_{\infty}(\R^d)\rightarrow C_{\infty}(\R^d)$ is defined by $\mathcal{LP}_{x^s} (\xi)(t)=((\mathcal{LP}_{x^s} \xi)^u(t),(\mathcal{LP}_{x^s} \xi)^s(t))$ for $t\in\R_{\geq 0}$, where
\begin{eqnarray*}
(\mathcal{LP}_{x^s}\xi)^u(t)
&:=&
\int_0^t (t-\tau)^{\alpha-1}E_{\alpha,\alpha}((t-\tau)^\alpha A^u)f^u(\xi(\tau))\;d\tau -\\
&&
\hspace{-25mm}E_{\alpha}(t^\alpha A^u)\lim_{s\to\infty}(E_{\alpha}(s^\alpha A^u))^{-1}\int_0^s(s-\tau)^{\alpha-1}E_{\alpha,\alpha}((s-\tau)^\alpha A^u)f^u(\xi(\tau))\;d\tau,
\\[1.5ex]
(\mathcal{LP}_{x^s}\xi)^s(t)
&:=&
E_{\alpha}(t^{\alpha} A^s) x^s+\int_0^t (t-\tau)^{\alpha-1} E_{\alpha,\alpha}((t-\tau)^{\alpha}A^s)f^s(\xi(\tau))\;d\tau.
\end{eqnarray*}

However, for such a Lyapunov--Perron operator, it is very difficult to compute explicitly the limit
\begin{equation}\label{NewEquation_01}
\lim_{s\to\infty}(E_{\alpha}(s^\alpha A^u))^{-1}\int_0^s (s-\tau)^{\alpha-1}E_{\alpha,\alpha}((s-\tau)^\alpha A^u)f^u(\xi(\tau))\;d\tau
\end{equation}
and to show the contraction property of this operator because the estimates of the matrix valued-functions $E_\alpha (t^\alpha A)$ and $E_{\alpha,\alpha} (t^\alpha A)$ are complicated, see e.g., \cite{Kai}. These estimates become much easier when $A^u$ and $A^s$ are diagonal, see \cite{Cong_2014}. So, our idea is to transform $A^u$ and $A^s$ to new matrices being close to diagonal matrices and we use these diagonal matrices as the linear part. In summary, our construction of a Lyapunov--Perron operator can be done as follows:
\begin{itemize}
\item Step 1: We first transform the matrix associated with the linear system to a new matrix being different from a diagonal matrix by a small nilpotent matrix. The correspondence between the stable manifold of the original system and the new system is established in Lemma \ref{LinearTransformation}.
\item Step 2: Interpreting the small nilpotent part constructed in Step 1 as a term of order $2$, we obtain a new system whose linear part is of diagonal form. Using the variation of constants formula, we construct a Lyapunov-Perron operator of the new system. For such an operator, a contraction property is proved in Proposition \ref{Prp1}.
\end{itemize}
After showing the contraction of a suitable Lyapunov operator, using an analogous argument as in \cite{Cong_2014}, we obtain the existence and properties of the stable manifold of \eqref{Eq1}:
\begin{theorem}[Stable manifold theorem for fractional differential equations]\label{Theorem1}
There exists $r>0$ such that the stable manifold $\mathcal W^s( B_{\R^{d}}(0,r))$ of \eqref{Eq1} is given as the graph of a Lipschitz function $g:B_{\R^{d^s}}(0,r)\rightarrow B_{\R^{d^u}}(0,r)$ which satisfies the following properties:
\begin{itemize}
\item [(i)] $g(0)=0$.
\item [(ii)] The map $g$ is Lipschitz continuous.
\end{itemize}
\end{theorem}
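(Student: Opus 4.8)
The plan is to realize the stable manifold as a graph built from the fixed points of the family $\{\mathcal{LP}_{x^s}\}_{x^s}$, following the Lyapunov--Perron scheme outlined above. First I would apply Lemma \ref{LinearTransformation} to replace \eqref{Eq1} by the transformed system whose linear part is $\mathrm{diag}(A^u,A^s)$ with $A^u,A^s$ differing from diagonal matrices only by a small nilpotent matrix, and whose nonlinearity $\tilde f$ absorbs this nilpotent part; $\tilde f$ then satisfies $\tilde f(0)=0$ and, crucially, has Lipschitz constant on $B_{\R^d}(0,r)$ that can be made as small as we wish by choosing both the transformation parameter and $r$ small (note $\lim_{r\to0}\ell_{\tilde f}(r)$ need not vanish, only be small enough to feed Proposition \ref{Prp1}). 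Since Lemma \ref{LinearTransformation} provides a linear correspondence between the two stable manifolds and a linear map sends Lipschitz graphs through the origin to Lipschitz graphs through the origin, it suffices to prove the theorem for the transformed system and transport the conclusion back.

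I would then fix $r$ small enough that Proposition \ref{Prp1} gives a uniform contraction constant $k<1$ for every $\mathcal{LP}_{x^s}$ on a suitable closed ball of $C_\infty(\R^d)$. By the Banach fixed point theorem each $x^s\in B_{\R^{d^s}}(0,r)$ yields a unique fixed point $\xi_{x^s}$, and since at $t=0$ one reads $(\mathcal{LP}_{x^s}\xi)^s(0)=x^s$, the fixed point satisfies $\xi_{x^s}(0)=(g(x^s),x^s)$ with $g(x^s):=(\xi_{x^s})^u(0)$; the choice of ball keeps $g(x^s)\in B_{\R^{d^u}}(0,r)$. Property (i) is immediate: for $x^s=0$ the constant $\xi\equiv0$ is a fixed point because $f(0)=0$ makes every integrand vanish, so by uniqueness $\xi_0\equiv0$ and $g(0)=0$.

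The core of the argument is to identify the graph of $g$ with $\mathcal{W}^s(B_{\R^d}(0,r))$. Using the variation of constants formula (Theorem \ref{Var_Const_Form}) together with the asymptotic expansion of Lemma \ref{lemma1}, I would prove that $\xi\in C_\infty(\R^d)$ is a fixed point of $\mathcal{LP}_{x^s}$ if and only if $\xi(\cdot)=\phi(\cdot,(x^u,x^s))$ is a bounded solution with $\xi(t)\to0$, the unstable datum $x^u$ being uniquely determined by the requirement that the unstable coordinate not blow up. The decisive point is the spectral dichotomy: for eigenvalues $\lambda$ of $A^u$ with $|\arg(\lambda)|<\frac{\alpha\pi}{2}$, Lemma \ref{lemma1}(i) forces $E_\alpha(t^\alpha A^u)$ to grow, so $(E_\alpha(s^\alpha A^u))^{-1}$ decays and the limit term \eqref{NewEquation_01} both exists and exactly cancels the exponentially growing contribution to the unstable coordinate; dually, for eigenvalues of $A^s$ with $|\arg(\lambda)|>\frac{\alpha\pi}{2}$, Lemma \ref{lemma1}(ii) makes $E_\alpha(t^\alpha A^s)$ decay, so the stable component tends to $0$. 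This yields both inclusions: each fixed point produces a point $(g(x^s),x^s)$ whose forward orbit stays in $B_{\R^d}(0,r)$ and decays to $0$, while any $x\in\mathcal{W}^s(B_{\R^d}(0,r))$ gives a bounded decaying orbit $\phi(\cdot,x)\in C_\infty(\R^d)$ that must coincide with $\xi_{x^s}$, forcing $x^u=g(x^s)$.

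Finally, for (ii) I would prove Lipschitz dependence of the fixed point on $x^s$. From $\xi_{x^s}-\xi_{y^s}=\mathcal{LP}_{x^s}\xi_{x^s}-\mathcal{LP}_{y^s}\xi_{y^s}$, splitting off $\mathcal{LP}_{x^s}\xi_{x^s}-\mathcal{LP}_{x^s}\xi_{y^s}$ (bounded by $k\|\xi_{x^s}-\xi_{y^s}\|_\infty$ by Proposition \ref{Prp1}) leaves $\mathcal{LP}_{x^s}\xi_{y^s}-\mathcal{LP}_{y^s}\xi_{y^s}$, where the two operators differ only through the linear term $E_\alpha(t^\alpha A^s)x^s$ in the stable component; boundedness of $E_\alpha(t^\alpha A^s)$ bounds this by $C\|x^s-y^s\|$. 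Rearranging gives $\|\xi_{x^s}-\xi_{y^s}\|_\infty\le\frac{C}{1-k}\|x^s-y^s\|$, and evaluating the unstable component at $t=0$ yields the Lipschitz estimate $\|g(x^s)-g(y^s)\|\le\frac{C}{1-k}\|x^s-y^s\|$. The main obstacle I anticipate is the equivalence step rather than any isolated inequality: showing that the implicit limit term \eqref{NewEquation_01} is well defined and precisely encodes the ``no blow-up'' condition for the unstable coordinate, which hinges entirely on the growth/decay dichotomy of Lemma \ref{lemma1}, and then carrying the graph structure back through the transformation of Lemma \ref{LinearTransformation} without spoiling its Lipschitz property.
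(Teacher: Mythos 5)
Your proposal follows essentially the same route as the paper: transform the linear part so that the Jordan nilpotent blocks become a $\delta$-small perturbation absorbed into the nonlinearity (Lemma \ref{LinearTransformation}, Remark \ref{Remark1}), run the Lyapunov--Perron/Banach fixed-point argument with the contraction estimate of Proposition \ref{Prp1} (this is the paper's Proposition \ref{Lemma6} and Theorem \ref{TheoremDE}), identify fixed points with stable-manifold orbits (the paper's Proposition \ref{Prp2}), and transport the graph back through the linear change of coordinates. Your Lipschitz estimate for $g$ is exactly the argument in the paper's proof of Theorem \ref{TheoremDE}(ii), and your observation that the Lipschitz constant of the transformed nonlinearity need only be made small rather than vanish is precisely what underlies the paper's choice $\delta=\frac{1}{3C(\alpha,\lambda)}$.
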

\subsubsection*{Transformation of the linear part}
Using \cite[Theorem 6.37, pp.~146]{Shilov}, there exists a nonsingular matrix $T\in\C^{d\times d}$ transforming $A$ into the Jordan normal form, i.e.
\[
T^{-1}A T=\hbox{diag}(A_1,\dots,A_n),
\]
where for $i=1,\dots,n$ the block $A_i$ is of the following form
\[
A_i=\lambda_i\, \id_{d_i\times d_i}+\eta_i\, N_{d_i\times d_i},
\]
where $\eta_i\in\{0,1\}$, $\lambda_i \in \{\hat\lambda_1,\ldots,\hat\lambda_m\}$, and the nilpotent matrix $N_{d_i\times d_i}$ is given by
\[
N_{d_i\times d_i}:=
\left(
      \begin{array}{*7{c}}
      0  &     1         &    0      & \cdots        &  0        \\
        0        & 0    &    1     &   \cdots      &              0\\
        \vdots &\vdots        &  \ddots         &          \ddots &\vdots\\
        0 &    0           &\cdots           &  0 &          1 \\

        0& 0  &\cdots                                          &0         & 0 \\
      \end{array}
    \right)_{d_i \times d_i}.
\]
Let us notice that by this transformation we go from the field of real numbers  out to the field of complex numbers, and we may remain in the field of real numbers only if all eigenvalues of $A$ are real. For a general real-valued matrix $A$ we may simply embed $\R$ into $\C$, consider $A$ as a complex-valued matrix and thus get the above Jordan form for $A$. Alternatively, we may use a more cumbersome real-valued Jordan form (for discussion of a similar issue for FDE see also Diethelm~\cite[pp.\ 152--153]{Kai}). For simplicity we use the embedding method and omit the discussion on how to return back to the field of real numbers. Note also that this kind of technique is well known in the theory of ordinary differential equations. 

 By the hyperbolicity assumption \eqref{Ordering} we may order $\lambda_i$, $i=1,\ldots,n$, in such a way that there exists an index $k\in \{1,\ldots, n-1\}$ with the property that
$$
|\hbox{arg}(\lambda_1)|,\dots,|\hbox{arg}(\lambda_k)|<\frac{\alpha\pi}{2}<|\hbox{arg}(\lambda_{k+1})|,\dots,|\hbox{arg}(\lambda_n)|.
$$
Let $\delta$ be an arbitrary but fixed positive number. Using the transformation $P_i:=\textup{diag}(1,\delta,\dots,\delta^{d_i-1})$, we obtain that
\begin{equation*}
P_i^{-1} A_i P_i=\lambda_i\, \id_{d_i\times d_i}+\delta_i\, N_{d_i\times d_i},
\end{equation*}
$\delta_i\in \{0,\delta\}$. Hence, under the transformation $y:=(TP)^{-1}x$ system \eqref{Eq1} becomes
\begin{equation}\label{NewSystem}
^{C}D_{0+}^\alpha x(t)=\hbox{diag}(J_1,\dots,J_n)x(t)+h(x(t)),
\end{equation}
where $J_i:=\lambda_i \id_{d_i\times d_i}$ for $i=1,\dots,n$ and the function $h$ is given by
\begin{equation}\label{Eq3}
h(x):=\text{diag}(\delta_1N_{d_1\times d_1},\dots,\delta_nN_{d_n\times d_n})x+(TP)^{-1}f(TPx).
\end{equation}
\begin{remark}\label{Remark1}
Note that the map $x\mapsto \text{diag}(\delta_1N_{d_1\times d_1},\dots,\delta_nN_{d_n\times d_n})x$ is a Lipschitz continuous function with a Lipschitz constant $\delta$. Thus, by \eqref{Lipschitz condition} we have
\[
h(0)=0,\qquad \lim_{r\to 0}\ell_h(r)\le \delta.
\]
\end{remark}

Next, we establish the relation between the stable manifold of the new system \eqref{NewSystem} and the stable manifold of the original system \eqref{Eq1}.
\begin{lemma}\label{LinearTransformation}
For any neighborhood $U\subseteq \R^d$ of the origin, let $\mathcal{W}^s_{\rm new}(U)$ denote the stable manifold of \eqref{NewSystem} in $U$. Then,
\[
\mathcal{W}^s(U)=TP\mathcal{W}^s_{\rm new}((TP)^{-1}U),
\]
where $\mathcal{W}^s(U)$ is the stable manifold of \eqref{Eq1} in $U$.
\end{lemma}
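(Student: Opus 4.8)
The plan is to reduce the lemma to a linear conjugacy between the solutions of \eqref{Eq1} and \eqref{NewSystem}. Writing $M:=TP$ for the fixed nonsingular transformation matrix, I would first observe that the change of variables $y=M^{-1}x$ carries solutions of \eqref{Eq1} bijectively onto solutions of \eqref{NewSystem}, and then transport the three defining conditions of the stable manifold (membership in the neighborhood, forward invariance of the trajectory, and decay to the origin) across this conjugacy. Since $M$ is a homeomorphism of the phase space fixing the origin, each of these conditions is preserved, and the asserted set identity follows at once.

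First I would establish the solution conjugacy. Let $\phi(\cdot,x)$ denote the solution of \eqref{Eq1} with $\phi(0,x)=x$, and let $\psi(\cdot,y)$ denote the solution of \eqref{NewSystem} with $\psi(0,y)=y$. Because $M$ is constant and the Caputo derivative is linear, $^{C}D_{0+}^{\alpha}\big(M^{-1}\phi(t,x)\big)=M^{-1}\,{}^{C}D_{0+}^{\alpha}\phi(t,x)$. Using the equation satisfied by $\phi$ together with the identity $M^{-1}AM=\mathrm{diag}(J_1,\dots,J_n)+\mathrm{diag}(\delta_1 N_{d_1\times d_1},\dots,\delta_n N_{d_n\times d_n})$ and the definition \eqref{Eq3} of $h$, a direct computation shows that $t\mapsto M^{-1}\phi(t,x)$ solves \eqref{NewSystem} with initial value $M^{-1}x$. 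By the global uniqueness of solutions assumed after \eqref{Eq1a}, this forces
\[
\psi(t,M^{-1}x)=M^{-1}\phi(t,x),\qquad\text{equivalently}\qquad \phi(t,x)=M\,\psi(t,M^{-1}x),
\]
for all $t\in\R_{\geq 0}$.

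Next I would translate the stable-manifold conditions. Fix a neighborhood $U$ of the origin and set $y:=M^{-1}x$, so that $x\in U$ if and only if $y\in M^{-1}U$. From the conjugacy $\phi(t,x)=M\psi(t,y)$ and the invertibility of $M$ we obtain, for every $t\in\R_{\geq 0}$, the equivalence $\phi(t,x)\in U\Leftrightarrow \psi(t,y)\in M^{-1}U$; moreover $\lim_{t\to\infty}\phi(t,x)=0$ if and only if $\lim_{t\to\infty}\psi(t,y)=0$, since the linear isomorphism $M$ and its inverse are continuous. Combining these three equivalences shows that $x\in\mathcal{W}^s(U)$ if and only if $M^{-1}x\in\mathcal{W}^s_{\rm new}(M^{-1}U)$, that is, $\mathcal{W}^s(U)=M\,\mathcal{W}^s_{\rm new}(M^{-1}U)=TP\,\mathcal{W}^s_{\rm new}((TP)^{-1}U)$, as claimed.

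I expect no genuine difficulty here; the only point requiring care is the rigorous justification of the conjugacy identity, which rests on the linearity of the Caputo derivative under a constant linear change of coordinates and on the global uniqueness of solutions. A secondary bookkeeping issue is that $T$, and hence $M$, is in general complex: as explained in the text one works on the complexified space $\C^d\cong\R^{2d}$ via the embedding $\R\hookrightarrow\C$, and since the argument above uses only that $M$ is a linear homeomorphism fixing the origin, it applies verbatim in that setting.
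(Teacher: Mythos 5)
Your proposal is correct and follows essentially the same route as the paper: conjugate solutions via the constant linear map $TP$ and transport the defining conditions of the stable manifold (containment in the neighborhood and decay to the origin) across this conjugacy. The only difference is that you spell out the justification of the conjugacy identity (linearity of the Caputo derivative under a constant change of coordinates, the similarity identity for $A$, and uniqueness of solutions), which the paper asserts without proof, so your write-up is a slightly more detailed version of the same argument.
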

\begin{proof}
Let $\phi_{\rm new}(\cdot,x)$ denote the solution of \eqref{NewSystem} satisfying $\phi_{\rm new}(0,x)=x$. Then,
\[
\phi(t,x) := (TP)\phi_{\rm new}(t,(TP)^{-1}x) \qquad\hbox{for all } (t,x)\in\R_{\geq 0}\times \R^d
\]
is the solution of \eqref{Eq1} satisfying $\phi(0,x)=x$.
Let $U$ be an arbitrary neighborhood of the origin. Recall that $x\in {\mathcal W}^s(U)$ if and only if $\phi(t,x)\in U$ for all $t\in\R_{\geq 0}$ and $\lim_{t\to\infty}\phi(t,x)=0$. Equivalently,
\[
\phi_{\rm new}(t,(TP)^{-1}x)\in (TP)^{-1}(U)\quad \hbox{and}\quad \lim_{t\to\infty}\phi_{\rm new}(t,(TP)^{-1}x)=0,
\]
which is again equivalent to $(TP)^{-1}x\in \mathcal{W}^s_{\rm new}((TP)^{-1}U)$. Consequently, $\mathcal{W}^s(U)=TP\mathcal{W}^s_{\rm new}((TP)^{-1}U)$ and the proof is complete.
\end{proof}
\subsubsection*{Construction of an appropriate Lyapunov--Perron operator}
According to Lemma \ref{LinearTransformation}, the stable manifold of \eqref{Eq1} in a neighborhood $U$ can be represented by the stable manifold in the neighborhood $(TP)^{-1}U$ of \eqref{NewSystem}. Hence, our focus now is to investigate the stable manifold of \eqref{NewSystem}. Analog to the approach used in \cite{Cong_2014}, we will construct the graph of the stable manifold as the fixed point of the corresponding Lyapunov--Perron operator. Before doing this, we discuss some conventions which are used in the remaining part of this section: The space $\R^d$ can be written as $\R^d=\R^{d_1}\times\dots\times\R^{d_n}$ or $\R^d=\R^{d^u}\times \R^{d^s}$ where $d^u:=d_1+\dots+d_k$ and $d^s:=d_{k+1}+\dots+d_n$. A vector $x\in\R^d$ can be written component-wise as $x=(x^u,x^s)$ or $x=(x^1,\dots,x^n)$, respectively.

%
%

%
For any $x^s=(x^{k+1},\dots,x^{n})^{\rm T}\in \R^{d^s}=\R^{d_{k+1}}\times\dots\times \R^{d_n}$, the operator $\mathcal{T}_{x^s}: C_\infty(\R^d)\rightarrow C_\infty(\R^d)$ is defined by
\[
\mathcal{T}_{x^s}\xi(t)=((\mathcal{T}_{x^s}\xi)^1(t),\dots,(\mathcal{T}_{x^s}\xi)^n(t))^{\rT}\qquad\hbox{for } t\in\R_{\geq 0},
\]
where for $i=1,\dots,k$
\begin{multline*}
(\mathcal{T}_{x^s}\xi)^i(t)=\int_0^t (t-\tau)^{\alpha-1}E_{\alpha,\alpha}((t-\tau)^\alpha J_i)h^i(\xi(\tau))\;d\tau\\
 -\lambda_i^{\frac{1}{\alpha}-1}E_\alpha(t^\alpha J_i)\int_0^\infty \exp{(-\lambda_i^{\frac{1}{\alpha}}\tau)}h^i(\xi(\tau))\;d\tau,
\end{multline*}
and for $i=k+1,\dots,n$
\[
(\mathcal{T}_{x^s}\xi)^i(t)
=
E_\alpha(t^\alpha J_i)x^i+\int_0^t (t-\tau)^{\alpha-1}E_{\alpha,\alpha}((t-\tau)^\alpha J_i)h^i(\xi(\tau))\;d\tau,
\]
is called the \emph{Lyapunov-Perron operator associated with \eqref{NewSystem}}.

An important step towards establishing the stable manifold is to estimate the Lyapunov--Perron operator. In \cite{Cong_2014}, an estimate on the Lyapunov--Perron operator for $n=2$ and $d_1=d_2=1$ was developed. This estimate can be easily extended to our situation when all matrices $J_i$ are of diagonal form. Note that a minor change is due to the fact that the eigenvalues $\lambda_i$ can be complex. However, Lemma \ref{lemma1} in Subsection \ref{MF_Function} provides the asymptotic behavior of the Mittag-Leffler function $E_{\alpha}(z)$ also in this case. Thus, we arrive at the following estimate.
\begin{proposition}\label{Prp1} Consider system \eqref{NewSystem}. Then, there exists a constant $C(\alpha,\mathbf{\lambda})$ depending on $\alpha,\lambda:=(\lambda_1,\dots,\lambda_n)$ such that for all $x^s,\widehat x^s\in\R^{d^s}$ and $\xi,\widehat\xi\in C_\infty(\R^d)$ the following inequality holds
\begin{align}\label{Contraction}
\notag \|\mathcal T_{x^s}\xi-\mathcal T_{\widehat{x}^s}\widehat\xi\|_\infty
\leq &
\max_{k+1\le i \le n}\sup_{t\geq 0}|E_\alpha(\lambda_i t^\alpha)| \|x^s-\widehat{x}^s\|\\
&+C(\alpha,\lambda)\;
\ell_h(\max(\|\xi\|_\infty,\|\widehat\xi\|_\infty))\|\xi-\widehat\xi\|_\infty.
\end{align}
Consequently, $\mathcal T_{x^s}$ is well-defined and
\begin{equation}\label{well_defined}
\|\mathcal T_{x^s}\xi-\mathcal T_{x^s}\widehat\xi\|_\infty \leq C(\alpha,\lambda)\; \ell_h(\max(\|\xi\|_\infty,\|\widehat\xi\|_\infty))\|\xi-\widehat\xi\|_\infty.
\end{equation}
\end{proposition}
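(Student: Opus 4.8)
The plan is to estimate the difference componentwise, exploiting that each $J_i=\lambda_i\,\id_{d_i\times d_i}$ is a scalar multiple of the identity, so that $E_{\alpha,\beta}((t-\tau)^\alpha J_i)=E_{\alpha,\beta}((t-\tau)^\alpha\lambda_i)\,\id_{d_i\times d_i}$ and every block reduces to \emph{scalar} Mittag-Leffler functions evaluated at $(t-\tau)^\alpha\lambda_i$. Since $\R^d$ carries the max norm, it suffices to bound $\sup_{t\geq0}\|(\mathcal T_{x^s}\xi)^i(t)-(\mathcal T_{\widehat x^s}\widehat\xi)^i(t)\|$ for each $i$ and then take the maximum over $i$. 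Throughout I will use the Lipschitz bound $\|h^i(\xi(\tau))-h^i(\widehat\xi(\tau))\|\leq \ell_h(\max(\|\xi\|_\infty,\|\widehat\xi\|_\infty))\|\xi-\widehat\xi\|_\infty$ from Remark~\ref{Remark1}, and I will split the stable and unstable blocks.

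For the stable blocks $i=k+1,\dots,n$ the operator has two pieces. The term $E_\alpha(t^\alpha J_i)x^i$ is linear in $x^i$ and contributes $|E_\alpha(\lambda_i t^\alpha)|\,\|x^i-\widehat x^i\|$; taking $\sup_{t\geq0}$ and then the maximum over stable blocks produces exactly the first summand on the right-hand side of \eqref{Contraction}, using $\|x^i-\widehat x^i\|\leq\|x^s-\widehat x^s\|$. Finiteness of that supremum follows from Lemma~\ref{lemma1}(ii): since $|\arg(\lambda_i)|>\tfrac{\alpha\pi}{2}$, I may pick $\mu_i\in(\tfrac{\alpha\pi}{2},\min(|\arg(\lambda_i)|,\alpha\pi))$, giving $E_\alpha(\lambda_i t^\alpha)=\mathcal O(t^{-\alpha})$ as $t\to\infty$ while being continuous with value $1$ at $t=0$. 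For the convolution piece I bound $\int_0^t(t-\tau)^{\alpha-1}|E_{\alpha,\alpha}((t-\tau)^\alpha\lambda_i)|\,d\tau=\int_0^t s^{\alpha-1}|E_{\alpha,\alpha}(s^\alpha\lambda_i)|\,ds$ uniformly in $t$: near $s=0$ the integrand is $\mathcal O(s^{\alpha-1})$, while for large $s$ Lemma~\ref{lemma1}(ii) gives $E_{\alpha,\alpha}(s^\alpha\lambda_i)=\mathcal O(s^{-2\alpha})$ (the $k=1$ term drops out because $1/\Gamma(\alpha-\alpha)=0$), so the integrand is $\mathcal O(s^{-\alpha-1})$ and integrable at infinity. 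This yields a constant times $\ell_h(\cdot)\|\xi-\widehat\xi\|_\infty$.

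The crux is the unstable blocks $i=1,\dots,k$, where $|\arg(\lambda_i)|<\tfrac{\alpha\pi}{2}$ forces $E_\alpha$ and $E_{\alpha,\alpha}$ to grow exponentially, and the estimate survives only because the two terms defining $(\mathcal T_{x^s}\xi)^i$ are arranged so their leading exponentials cancel; note these blocks carry no $x^s$-dependence and so feed only the Lipschitz summand. Writing $\nu_i:=\lambda_i^{1/\alpha}$, the condition $|\arg(\lambda_i)|<\tfrac{\alpha\pi}{2}$ gives $|\arg(\nu_i)|<\tfrac{\pi}{2}$, hence $\mathrm{Re}(\nu_i)>0$. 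Applying Lemma~\ref{lemma1}(i) (with $\mu>|\arg(\lambda_i)|$) to both functions, the leading term of $(t-\tau)^{\alpha-1}E_{\alpha,\alpha}((t-\tau)^\alpha\lambda_i)$ equals $\tfrac{1}{\alpha}\lambda_i^{\frac1\alpha-1}\exp((t-\tau)\nu_i)$, and the leading term of $\lambda_i^{\frac1\alpha-1}E_\alpha(t^\alpha\lambda_i)$ equals $\tfrac{1}{\alpha}\lambda_i^{\frac1\alpha-1}\exp(t\nu_i)$. The prefactors agree, so subtracting and integrating collapses the difference of the two leading parts into the single tail integral
\[
-\frac{1}{\alpha}\lambda_i^{\frac1\alpha-1}\exp(t\nu_i)\int_t^\infty \exp(-\nu_i\tau)\,[h^i(\xi(\tau))-h^i(\widehat\xi(\tau))]\,d\tau.
\]
Because $\mathrm{Re}(\nu_i)>0$, one has $|\exp(t\nu_i)|\int_t^\infty|\exp(-\nu_i\tau)|\,d\tau=1/\mathrm{Re}(\nu_i)$ uniformly in $t$, so this term is controlled by $\tfrac{|\lambda_i^{1/\alpha-1}|}{\alpha\,\mathrm{Re}(\nu_i)}\,\ell_h(\cdot)\|\xi-\widehat\xi\|_\infty$. \textbf{The main obstacle} is then disposing of the lower-order pieces: the polynomial tails $-\sum_{k\geq1}z^{-k}/\Gamma(\cdot)$ and the $\mathcal O(|z|^{-1-p})$ errors from Lemma~\ref{lemma1}, together with the region near $\tau=t$ where the argument is small and the asymptotic expansion must be replaced by the uniform bound $|E_{\alpha,\alpha}(z)|\leq\text{const}$. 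Splitting each $\tau$-integral at $|t-\tau|=1$, all of these reduce either to integrals of $s^{\alpha-1}|E_{\alpha,\alpha}(s^\alpha\lambda_i)|$ over bounded sets or to polynomially decaying integrands, and are therefore bounded by a constant depending only on $\alpha$ and $\lambda$ times $\ell_h(\cdot)\|\xi-\widehat\xi\|_\infty$; this bookkeeping is technically the heaviest but conceptually routine part.

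Finally, collecting the per-block bounds and taking the maximum over $i$ and the supremum over $t\geq0$ yields \eqref{Contraction} with $C(\alpha,\lambda)$ the sum of the finitely many block constants produced above; inequality \eqref{well_defined} is the special case $x^s=\widehat x^s$. Well-definedness of $\mathcal T_{x^s}$, i.e.\ $\mathcal T_{x^s}\xi\in C_\infty(\R^d)$, follows by applying \eqref{Contraction} with $\widehat\xi\equiv0$ and $\widehat x^s=0$: since $h(0)=0$ by Remark~\ref{Remark1}, the zero function is sent to the explicit bounded map whose stable blocks are $E_\alpha(t^\alpha J_i)x^i$ and whose unstable blocks vanish, so $\|\mathcal T_{x^s}\xi\|_\infty<\infty$, while continuity of $t\mapsto\mathcal T_{x^s}\xi(t)$ is inherited from that of the Mittag-Leffler functions together with dominated convergence applied to the (absolutely convergent) defining integrals.
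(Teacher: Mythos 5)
Your proof is correct and follows essentially the same route the paper intends: the paper itself gives no written proof of Proposition \ref{Prp1}, stating only that the planar estimate of \cite{Cong_2014} ``can be easily extended'' once the blocks $J_i$ are diagonal and Lemma \ref{lemma1} covers complex eigenvalues, and your argument is precisely that extension --- blockwise reduction to scalar Mittag-Leffler functions in the max norm, uniform integrability of $s^{\alpha-1}|E_{\alpha,\alpha}(s^{\alpha}\lambda_i)|$ for the stable blocks (exploiting $1/\Gamma(0)=0$ so the kernel decays like $s^{-\alpha-1}$), and cancellation of the matching leading exponentials $\tfrac{1}{\alpha}\lambda_i^{1/\alpha-1}\exp\bigl((t-\tau)\lambda_i^{1/\alpha}\bigr)$ in the unstable blocks, leaving a tail integral bounded by $1/\mathrm{Re}\bigl(\lambda_i^{1/\alpha}\bigr)$. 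The key computations (agreement of the prefactors, $\mathrm{Re}\bigl(\lambda_i^{1/\alpha}\bigr)>0$ from $|\mathrm{arg}(\lambda_i)|<\tfrac{\alpha\pi}{2}$, and integrable or bounded remainder terms from the asymptotic expansions) all check out, so your write-up is in fact more detailed than what the paper provides.
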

So far, we have proved that the Lyapunov--Perron operator is well-defined and Lipschitz continuous. Note that the Lipschitz constant $C(\alpha,\lambda)$ is independent of the constant $\delta$ which is hidden in the coefficients of system \eqref{NewSystem}. From now on, we choose and fix the constant $\delta$ as follows $\delta:=\frac{1}{3C(\alpha,\lambda)}$. The remaining difficult question is now to choose a ball with small radius in $C_\infty(\R^d)$ such that the restriction of the Lyapunov--Perron operator to this ball is strictly contractive. A positive answer to this question is given in the following technical lemma.
\begin{proposition}\label{Lemma6}
The following statements hold:
\begin{itemize}
\item [(i)] There exists $r^*>0$ such that
\begin{equation}\label{Eq7a}
C(\alpha,\lambda)\;  \ell_h(r^*) \leq \frac{2}{3}.
\end{equation}
\item [(ii)] Choose and fix $r^*>0$ satisfying \eqref{Eq7a}. Define
\begin{equation}\label{Eq7b}
r:=\frac{r^*}{3\max_{k+1\le i\le n}\sup_{t\ge 0}|E_\alpha(\lambda_it^\alpha)|}.
\end{equation}
Let  $B_{C^{\infty}}(0,r^*):=\{\xi\in C_\infty(\R^d):\left||\xi|\right|_\infty\le r^*\}$. Then, for any $x^s\in B_{\R^{d_s}}(0,r)$ we have $\mathcal T_{x^s} (B_{C^{\infty}}(0,r^*))\subset B_{C^{\infty}}(0,r^*)$ and
\begin{equation*}\label{LipschitzContinuity}
\|\mathcal T_{x^s}\xi-\mathcal T_{x^s}\widehat {\xi}\|_\infty
\leq
\frac{2}{3}\|\xi-\widehat{\xi}\|_\infty\quad\hbox{ for all } \xi,\widehat{\xi}\in B_{C^{\infty}}(0,r^*).
\end{equation*}
\end{itemize}

\end{proposition}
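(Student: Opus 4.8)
The plan is to derive both parts directly from Proposition \ref{Prp1} and Remark \ref{Remark1}, exploiting only the monotonicity of $r\mapsto \ell_h(r)$ and the normalization $\delta=\frac{1}{3C(\alpha,\lambda)}$ fixed just before the statement.

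For part (i), I would first note that $\ell_h$ is nondecreasing in $r$, since it is a supremum of difference quotients taken over the increasing family of balls $B_{\R^d}(0,r)$; hence $\lim_{r\to 0}\ell_h(r)=\inf_{r>0}\ell_h(r)$ exists. By Remark \ref{Remark1} this limit is at most $\delta$, so $\lim_{r\to 0}C(\alpha,\lambda)\ell_h(r)\le C(\alpha,\lambda)\delta=\frac13<\frac23$. By the definition of the limit there is an $r^*>0$ with $C(\alpha,\lambda)\ell_h(r^*)\le\frac23$, which is precisely \eqref{Eq7a}.

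For part (ii), I would treat contraction and invariance separately. The contraction is immediate: for $\xi,\widehat\xi\in B_{C^{\infty}}(0,r^*)$ one has $\max(\|\xi\|_\infty,\|\widehat\xi\|_\infty)\le r^*$, so monotonicity of $\ell_h$ together with \eqref{well_defined} gives $\|\mathcal T_{x^s}\xi-\mathcal T_{x^s}\widehat\xi\|_\infty\le C(\alpha,\lambda)\,\ell_h(r^*)\,\|\xi-\widehat\xi\|_\infty\le\frac23\|\xi-\widehat\xi\|_\infty$ by \eqref{Eq7a}. For the invariance $\mathcal T_{x^s}(B_{C^{\infty}}(0,r^*))\subset B_{C^{\infty}}(0,r^*)$, the key observation is that $\mathcal T_0 0=0$: taking $\xi\equiv 0$ and $x^s=0$, every integrand contains the factor $h(0)=0$ and the remaining boundary term $E_\alpha(t^\alpha J_i)x^i$ vanishes. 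I would then apply the full estimate \eqref{Contraction} with $\widehat\xi=0$ and $\widehat{x}^s=0$ to get $\|\mathcal T_{x^s}\xi\|_\infty\le \max_{k+1\le i\le n}\sup_{t\ge 0}|E_\alpha(\lambda_i t^\alpha)|\,\|x^s\|+C(\alpha,\lambda)\,\ell_h(\|\xi\|_\infty)\,\|\xi\|_\infty$. Bounding $\|x^s\|\le r$, $\|\xi\|_\infty\le r^*$ and $\ell_h(\|\xi\|_\infty)\le\ell_h(r^*)$, the first summand equals $\frac{r^*}{3}$ by the definition \eqref{Eq7b} of $r$, while the second is at most $\frac23 r^*$ by \eqref{Eq7a}; adding the two yields $\|\mathcal T_{x^s}\xi\|_\infty\le r^*$, as required.

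I expect no serious obstacle: the statement is essentially a bookkeeping consequence of Proposition \ref{Prp1}. The only points needing care are checking that $r$ in \eqref{Eq7b} is well defined, i.e.\ that $\sup_{t\ge 0}|E_\alpha(\lambda_i t^\alpha)|$ is finite and positive for the stable indices $i=k+1,\dots,n$ (which follows from Lemma \ref{lemma1}(ii), since $|\arg(\lambda_i)|>\frac{\alpha\pi}{2}$ forces $E_\alpha(\lambda_i t^\alpha)$ to be continuous in $t$ and to decay as $t\to\infty$), and recognizing that the identity $\mathcal T_0 0=0$ is exactly what converts the Lipschitz-type bound of Proposition \ref{Prp1} into the absolute bound needed for invariance.
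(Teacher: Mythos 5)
Your proposal is correct and takes essentially the same route as the paper: part (i) from Remark \ref{Remark1} together with the normalization $\delta\, C(\alpha,\lambda)=\tfrac{1}{3}$, and part (ii) by applying the estimates \eqref{Contraction} and \eqref{well_defined} of Proposition \ref{Prp1} on the ball $B_{C^{\infty}}(0,r^*)$ and using the definition \eqref{Eq7b} of $r$. The only difference is that you make explicit the identity $\mathcal{T}_0 0=0$ (a consequence of $h(0)=0$), which the paper uses implicitly when it converts the Lipschitz-type bound \eqref{Contraction} into the absolute bound $\|\mathcal{T}_{x^s}\xi\|_\infty\le \tfrac{1}{3}r^*+\tfrac{2}{3}r^*$; this is a clarification of the same argument, not a different one.
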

\begin{proof}
(i) By Remark \ref{Remark1}, $\lim_{r\to 0}\ell_h(r)\le \delta$. Since $\delta C(\alpha,\lambda)=\frac{1}{3}$, the assertion (i) is proved.

(ii) Let $x^s\in \R^{d^s}$ be arbitrary with $\|x^s\|\leq r$. Let $\xi\in B_{C^{\infty}}(0,r^*)$ be arbitrary. According to \eqref{Contraction} in Proposition \ref{Prp1}, we obtain that
\begin{eqnarray*}
\|\mathcal T_{x^s}\xi\|_\infty
&\leq&
 \max_{k+1\le i\le \ell}\sup_{t\ge 0}|E_\alpha(\lambda_it^\alpha)|\|x^s\|
 +
 C(\alpha,\lambda)\,\ell_{h}(r^*)\|\xi\|_{\infty}\\[1ex]
 &\leq&
 \frac{1}{3}r^*+\frac{2}{3}r^*,
\end{eqnarray*}
which proves that $\mathcal T_{x^s}(B_{C^{\infty}}(0,r^*))\subset B_{C^{\infty}}(0,r^*)$. Furthermore, by  Proposition \ref{Prp1} and part (i) for all $x^s\in B_{\R^{d^s}}(0,r)$ and $\xi,\widehat{\xi}\in B_{C^{\infty}}(0,r^*)$ we have
\begin{eqnarray*}
\|\mathcal T_{x^s}\xi-\mathcal T_{x^s}\widehat{\xi}\|_\infty
&\leq&
C(\alpha,\lambda)\ell_{h}(r^*)\;\|\xi-\widehat{\xi}\|_\infty\\[1.5ex]
&\leq &
\frac{2}{3}\|\xi-\widehat{\xi}\|_\infty,
\end{eqnarray*}
which concludes the proof.
\end{proof}
\subsubsection*{Construction and properties of stable manifolds}
Before we are able to show Theorem \ref{Theorem1}, we need the following result about a one-to-one correspondence between the graph of the stable manifold and the fixed point of the Lyapunov--Perron operator. In \cite{Cong_2014} a detailed proof of this result for planar systems is provided and it is straightforward to extend this proof to the higher dimensional case.
\begin{proposition}\label{Prp2} For each $x=(x^u,x^s)^{\rT}\in\R^d$, let $\phi_{\rm new}(\cdot,x)$ denote the solution of \eqref{NewSystem} with the initial condition $\phi_{\rm new}(0,x)=x$. Then, the following statements hold:
\begin{itemize}
\item [(i)] If $x\in \mathcal{W}_{\rm new}^s(U)$, then $\phi_{\rm new}(\cdot,x)$ is a fixed point of $\mathcal T_{x^s}$.
\item [(ii)] If $\xi(t)=(\xi^u(t),\xi^s(t))^{\rT}$ is a fixed point of $\mathcal T_{x^s}$ for some $x^s\in\R^{d^s}$, then $\xi(t)$ is a solution of \eqref{NewSystem} with $\xi^s(0)=x^s$. Furthermore, suppose additionally that $\|\xi\|_\infty\le r^*$, where
$r^*$ satisfies \eqref{Eq7a}. Then, $\lim_{t\to\infty}\xi(t)=0$.
\end{itemize}
\end{proposition}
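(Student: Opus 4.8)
The plan is to follow the strategy of the planar case treated in \cite{Cong_2014}, reducing both statements to a block-by-block analysis. Since the linear part $\hbox{diag}(J_1,\dots,J_n)$ of \eqref{NewSystem} is block-diagonal with $J_i=\lambda_i\,\id_{d_i\times d_i}$, the variation of constants formula \eqref{Eq4} of Theorem~\ref{Var_Const_Form} splits, for each block $i$, into
\[
\phi_{\rm new}^i(t,x)=E_\alpha(\lambda_i t^\alpha)\,x^i+\int_0^t(t-\tau)^{\alpha-1}E_{\alpha,\alpha}(\lambda_i(t-\tau)^\alpha)\,h^i(\phi_{\rm new}(\tau,x))\;d\tau .
\]
The two analytic inputs I will use are this formula and the asymptotic expansions of $E_\alpha$ and $E_{\alpha,\alpha}$ from Lemma~\ref{lemma1}.

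For (i), set $\xi:=\phi_{\rm new}(\cdot,x)$; since $x\in\mathcal W^s_{\rm new}(U)$ we have $\xi\in C_\infty(\R^d)$ and $\xi(t)\to 0$. For the stable blocks $i=k+1,\dots,n$ the displayed identity \emph{is} the definition of $(\mathcal T_{x^s}\xi)^i$, so these components are already fixed. For the unstable blocks $i=1,\dots,k$ I would divide the identity by $E_\alpha(\lambda_i t^\alpha)$, which is nonzero for large $t$ because $|\arg(\lambda_i)|<\tfrac{\alpha\pi}{2}$ forces $|E_\alpha(\lambda_i t^\alpha)|\to\infty$ by Lemma~\ref{lemma1}(i); letting $t\to\infty$ the left-hand side tends to $0$ and one obtains
\begin{multline*}
x^i=-\lim_{t\to\infty}\frac{1}{E_\alpha(\lambda_i t^\alpha)}\int_0^t(t-\tau)^{\alpha-1}E_{\alpha,\alpha}(\lambda_i(t-\tau)^\alpha)\,h^i(\xi(\tau))\;d\tau\\
=-\lambda_i^{\frac1\alpha-1}\int_0^\infty\exp(-\lambda_i^{\frac1\alpha}\tau)\,h^i(\xi(\tau))\;d\tau,
\end{multline*}
the second equality being exactly the limit evaluation underlying the definition of $\mathcal T_{x^s}$ (cf.\ \eqref{NewEquation_01}), justified by inserting $E_{\alpha,\alpha}(z)\sim\tfrac1\alpha z^{\frac{1-\alpha}{\alpha}}\exp(z^{\frac1\alpha})$ and $E_\alpha(z)\sim\tfrac1\alpha\exp(z^{\frac1\alpha})$ from Lemma~\ref{lemma1}(i) and passing to the limit by dominated convergence. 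Substituting this value of $x^i$ back into the variation of constants identity reproduces the defining expression for $(\mathcal T_{x^s}\xi)^i$, so $\xi=\mathcal T_{x^s}\xi$.

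For (ii), the first assertion is the converse reading of the same identities. If $\xi=\mathcal T_{x^s}\xi$, then each stable block of the fixed-point equation is the representation \eqref{Eq4} of a solution of $^{C}D_{0+}^\alpha\xi^i=J_i\xi^i+h^i(\xi)$ with $\xi^i(0)=x^i$, while each unstable block is the same representation with initial value $\xi^i(0)=-\lambda_i^{\frac1\alpha-1}\int_0^\infty\exp(-\lambda_i^{\frac1\alpha}\tau)h^i(\xi(\tau))\,d\tau$. Since a bounded continuous function satisfying \eqref{Eq4} block-wise is a solution of \eqref{NewSystem}, it follows that $\xi$ solves \eqref{NewSystem} and $\xi^s(0)=x^s$.

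The decay statement is the part I expect to be the main obstacle. I would set $\beta:=\limsup_{t\to\infty}\|\xi(t)\|\le r^*$ and prove $\beta=0$ by a $\limsup$-contraction argument. Fix $\eps>0$ and $T$ with $\|\xi(\tau)\|\le\beta+\eps$ for $\tau\ge T$, and estimate each block of $\xi=\mathcal T_{x^s}\xi$ as $t\to\infty$. The homogeneous terms $E_\alpha(\lambda_i t^\alpha)x^i$ of the stable blocks vanish by Lemma~\ref{lemma1}(ii); splitting $\int_0^t=\int_0^T+\int_T^t$, the first piece tends to $0$ since the kernel decays as $t\to\infty$, while the second is bounded using $\|h^i(\xi(\tau))\|\le\ell_h(r^*)\|\xi(\tau)\|$ together with the integrability of $s\mapsto s^{\alpha-1}|E_{\alpha,\alpha}(\lambda_i s^\alpha)|$ for stable $\lambda_i$ (which holds because the $k=1$ term of the expansion in Lemma~\ref{lemma1}(ii) vanishes, giving $E_{\alpha,\alpha}(\lambda_i s^\alpha)=\mathcal O(s^{-2\alpha})$). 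For the unstable blocks the estimate relies on the cancellation that combines $E_\alpha(\lambda_i t^\alpha)$ with the improper integral into $-\lambda_i^{\frac1\alpha-1}E_\alpha(\lambda_i t^\alpha)\int_t^\infty\exp(-\lambda_i^{\frac1\alpha}\tau)h^i(\xi(\tau))\,d\tau$; since $|\arg(\lambda_i)|<\tfrac{\alpha\pi}{2}$ gives $\mathrm{Re}(\lambda_i^{\frac1\alpha})>0$, this tail decays at the same exponential rate at which $E_\alpha(\lambda_i t^\alpha)$ grows, so the product stays bounded by a constant times $\ell_h(r^*)(\beta+\eps)$. Collecting these bounds reproduces the constant of Proposition~\ref{Prp1}, so that $\beta\le C(\alpha,\lambda)\,\ell_h(r^*)\,\beta\le\tfrac23\beta$ by \eqref{Eq7a}, forcing $\beta=0$. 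The genuinely delicate point is this unstable-direction cancellation, where two individually exponentially growing quantities must be merged before any estimate is taken.
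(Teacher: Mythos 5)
Your proposal is correct and takes essentially the same route as the paper: the paper's own proof of Proposition~\ref{Prp2} is simply the citation ``Analog to \cite[Proposition 9]{Cong_2014},'' and your block-by-block argument is exactly that planar proof extended to diagonal blocks $J_i=\lambda_i\,\id_{d_i\times d_i}$ --- the same limit evaluation of \eqref{NewEquation_01} (Lemma 8 of \cite{Cong_2014}), the same integrability of $s\mapsto s^{\alpha-1}|E_{\alpha,\alpha}(\lambda_i s^\alpha)|$ via the vanishing $1/\Gamma(0)$ term in Lemma~\ref{lemma1}(ii), and the same unstable-direction tail cancellation. Nothing in your sketch deviates from, or falls short of, what the cited planar argument does.
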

\begin{proof}
Analog to \cite[Proposition 9]{Cong_2014}.
\end{proof}
The existence and properties of the stable manifold of \eqref{NewSystem} are established in the following theorem.
\begin{theorem}\label{TheoremDE}
Consider system \eqref{NewSystem}. Let $r^*$ satisfy \eqref{Eq7a} and $r$ be defined as in \eqref{Eq7b}.
Then, for any $x^s\in B_{\R^{d^s}}(0,r)$, there exists a unique $w(x^s)\in B_{\R^{d^u}}(0,r^*)$ such that $(w(x^s),x^s)\in \mathcal W^s_{\rm new}(B_{\R^d}(0,r^*))$. Furthermore, the map $w: B_{\R^{d^s}}(0,r)\rightarrow B_{\R^{d^u}}(0,r^*)$ satisfies the following properties:
\begin{itemize}
\item [(i)] $w(0)=0$.
\item [(ii)] The map $w$ is Lipschitz continuous. More precisely, for all $x^s,\widehat {x}^s\in B_{\R^{d^s}}(0,r)$ we have
\[
\|w(x^s)-w(\widehat{x}^s)\|\leq 3 \max_{k+1\le i\le n}\sup_{t\geq 0}|E_\alpha(\lambda_it^\alpha)|\|x^s-\widehat{x}^s\|.
\]
\end{itemize}
\end{theorem}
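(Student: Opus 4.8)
The plan is to obtain $w$ as a by-product of the Banach fixed point theorem applied to $\mathcal{T}_{x^s}$, and then to read off properties (i) and (ii) from the structure of the fixed point together with the contraction estimate \eqref{Contraction}. Fix $x^s\in B_{\R^{d^s}}(0,r)$. By Proposition \ref{Lemma6}(ii), $\mathcal{T}_{x^s}$ maps the closed ball $B_{C^{\infty}}(0,r^*)$ into itself and is a $\frac{2}{3}$-contraction there; since $B_{C^{\infty}}(0,r^*)$ is a closed subset of the Banach space $C_\infty(\R^d)$, it is complete, so there exists a unique fixed point $\xi_{x^s}\in B_{C^{\infty}}(0,r^*)$. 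By Proposition \ref{Prp2}(ii), $\xi_{x^s}$ is a solution of \eqref{NewSystem} with $\xi_{x^s}^s(0)=x^s$, and because $\|\xi_{x^s}\|_\infty\le r^*$ we get $\lim_{t\to\infty}\xi_{x^s}(t)=0$. As $\|\xi_{x^s}(t)\|\le r^*$ for all $t$, the initial value $\xi_{x^s}(0)$ lies in $\mathcal{W}^s_{\rm new}(B_{\R^d}(0,r^*))$. I would therefore define $w(x^s):=\xi_{x^s}^u(0)$; using the max norm, $\|w(x^s)\|\le\|\xi_{x^s}(0)\|\le r^*$, so $w(x^s)\in B_{\R^{d^u}}(0,r^*)$ and $(w(x^s),x^s)\in\mathcal{W}^s_{\rm new}(B_{\R^d}(0,r^*))$ by construction.

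For uniqueness, suppose $(w_1,x^s)$ and $(w_2,x^s)$ both lie in $\mathcal{W}^s_{\rm new}(B_{\R^d}(0,r^*))$ with $w_j\in B_{\R^{d^u}}(0,r^*)$. By the definition of the stable manifold the corresponding solutions $\phi_{\rm new}(\cdot,(w_j,x^s))$ remain in $B_{\R^d}(0,r^*)$, hence lie in $B_{C^{\infty}}(0,r^*)$; by Proposition \ref{Prp2}(i) each is a fixed point of the same operator $\mathcal{T}_{x^s}$. The uniqueness part of the Banach fixed point theorem then forces the two solutions to coincide, so in particular $w_1=w_2$. This pins down $w(x^s)$ uniquely. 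Property (i) follows by observing that $\xi\equiv0$ is the fixed point of $\mathcal{T}_0$: since $h(0)=0$ by Remark \ref{Remark1} and $x^s=0$, every integrand and every boundary term in the definition of $\mathcal{T}_{0}$ vanishes, so $\mathcal{T}_0(0)=0$; by uniqueness $\xi_0\equiv0$ and hence $w(0)=0$.

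For property (ii), set $M:=\max_{k+1\le i\le n}\sup_{t\ge0}|E_\alpha(\lambda_it^\alpha)|$ and let $\xi=\xi_{x^s}$, $\widehat\xi=\xi_{\widehat{x}^s}$ be the two fixed points. Writing $\xi=\mathcal{T}_{x^s}\xi$ and $\widehat\xi=\mathcal{T}_{\widehat{x}^s}\widehat\xi$ and applying \eqref{Contraction} together with $\|\xi\|_\infty,\|\widehat\xi\|_\infty\le r^*$ and $C(\alpha,\lambda)\ell_h(r^*)\le\frac{2}{3}$ from \eqref{Eq7a}, I get
\[
\|\xi-\widehat\xi\|_\infty\le M\|x^s-\widehat{x}^s\|+\tfrac{2}{3}\|\xi-\widehat\xi\|_\infty,
\]
which rearranges to $\|\xi-\widehat\xi\|_\infty\le 3M\|x^s-\widehat{x}^s\|$. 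Evaluating the unstable components at $t=0$ and using $\|\xi^u(0)-\widehat\xi^u(0)\|\le\|\xi(0)-\widehat\xi(0)\|\le\|\xi-\widehat\xi\|_\infty$ yields the claimed Lipschitz bound.

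The only genuinely delicate point is the uniqueness argument: one must check that every point of the stable manifold over a fixed $x^s$ produces a fixed point that actually lies inside $B_{C^{\infty}}(0,r^*)$, so that Banach uniqueness applies. This is exactly guaranteed by the defining condition $\phi_{\rm new}(t,\cdot)\in B_{\R^d}(0,r^*)$ of the stable manifold, which forces the associated solution to have sup-norm at most $r^*$. Everything else is a direct assembly of Propositions \ref{Lemma6}, \ref{Prp2} and the estimate \eqref{Contraction}.
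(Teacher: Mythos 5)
Your proof is correct and takes essentially the same route as the paper's: Proposition \ref{Lemma6} combined with the Contraction Mapping Principle to obtain the unique fixed point of $\mathcal{T}_{x^s}$ in $B_{C^{\infty}}(0,r^*)$, Proposition \ref{Prp2} to identify fixed points with points of $\mathcal{W}^s_{\rm new}(B_{\R^d}(0,r^*))$, the zero fixed point of $\mathcal{T}_0$ for property (i), and the estimate \eqref{Contraction} rearranged to give the Lipschitz constant $3\max_{k+1\le i\le n}\sup_{t\ge 0}|E_\alpha(\lambda_i t^\alpha)|$ for property (ii). The only difference is that you spell out explicitly the uniqueness argument (both solutions through $(w_j,x^s)$ stay in the ball, hence are fixed points of the same contraction), a step the paper compresses into a single sentence.
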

\begin{proof}
Let $x^s\in B_{\R^{d^s}}(0,r)$ be arbitrary. Using Proposition \ref{Lemma6} and the Contraction Mapping Principle, there exists a unique fixed point $\xi\in B_{C^{\infty}}(0,r^*)$ of $\mathcal T_{x^s}$. This, together with Proposition \ref{Prp2}, implies that there exists a unique element $x^u\in B_{\R^{d^u}}(0,r^*)$ such that $(x^u,x^s)^{\rT}\in\mathcal{W}_{\rm new}^s(B_{\R^d}(0,r^*))$. Define $w(x^s):=x^u$. To conclude the proof, the rest of this step is devoted to verify properties (i) and (ii) for the function $w$. Since $0$ is a fixed point of $\mathcal T_0$, it follows that $(0,0)^{\rT}\in \mathcal{W}_{\rm new}^s (B_{\R^d}(0,r^*))$. Therefore, $w(0)=0$ and the assertion (i) is thus proved. To prove (ii), let $x^s,\widehat{x}^s\in B_{\R^{d^s}}(0,r)$ be arbitrary. Define $\xi:=\phi_{\rm new}(\cdot,(w(x^s),x^s)^{\rT})$ and $\widehat{\xi}:=\phi_{\rm new}(\cdot,(w(\widehat{x}^s),\widehat{x}^s)^{\rT})$. Using Proposition \ref{Prp1} and Proposition \ref{Prp2}, we obtain that
\[
\|\xi-\widehat{\xi}\|_\infty=\|\mathcal T_{x^s}\xi-\mathcal T_{\widehat{x}^s}\widehat{\xi}\|_\infty
\leq
\|x^s-\widehat{x}^s\|\max_{k+1\le i\le n}\sup_{t\ge 0}|E_\alpha(\lambda_it^\alpha)|+\frac{2}{3}\|\xi-\widehat{\xi}\|_\infty,
\]
which implies that
\[
\|w(x^s)-w(\widehat{x}^s)\|
\leq \|\xi-\widehat{\xi}\|_\infty
\leq 3\max_{k+1\le i\le n}\sup_{ t\ge 0}|E_\alpha(\lambda_it^\alpha)|\|x^s-\widehat{x}^s\|,
\]
and the proof is complete.
\end{proof}
\begin{proof}[Proof of Theorem \ref{Theorem1}]
Using Theorem \ref{TheoremDE}, the stable manifold of \eqref{NewSystem} can be represented as the graph of a Lipschitz continuous function. This together with Lemma \ref{LinearTransformation} implies that the stable manifold of the original system \eqref{Eq1} is also given as the graph of a Lipschitz continuous function. The proof of this theorem is complete.
\end{proof}
\section{Discussion of the paper by A.\ Deshpande and V.\ Daftardar-Gejji~\cite{Deshpande}}\label{sec.discusstion}
In this section, we discuss the paper ~\cite{Deshpande} by A.\ Deshpande and V.\ Daftardar-Gejji. As we stated in the Introduction, A. Deshpande and V. Daftardar-Gejji~\cite{Deshpande} formulated a stable manifold theorem \cite[Theorem 3]{Deshpande}, but the proof they gave is incorrect. To show this, we give an example to illustrate the fact that the manifold they constructed in the proof of the theorem is not the stable manifold as they claimed. The example is a simple two-dimensional system where such a stable manifold is already known to exist, see Cong et al.~\cite{Cong_2014}. Namely, let us consider the two-dimensional fractional system 
\begin{equation}\label{mainsystem}
^{C}D_{0+}^\alpha x(t)=Ax(t)+f(x(t)),\qquad x(0)=\hat{x}\in \R^2,
\end{equation}
of order $\alpha\in (0,1)$, where  
\[
 A=   \left(
      \begin{array}{*5{c}}
       -2    &     0 \\
     0      & 2   \\
      \end{array}
    \right),
\] 
and $$f(x_1,x_2)=\left(
      \begin{array}{*5{c}}
     x_1^2\\
   x_1^2+x_2^2  \\
      \end{array}
    \right).$$
   From here to the end of this section we use the notation $N_r(0)$, $E^s$, $T_\sigma$, $\pi_s$, $\pi_u$, $B(t)$ for the system \eqref{mainsystem} as defined in the paper \cite{Deshpande}.
According to A. Deshpande and V. Daftardar-Gejji~\cite[Proof of Theorem 3, pp. 2448--2450]{Deshpande}, one can find a local stable manifold of \eqref{mainsystem} near the origin, 
denoted by $W^s_{loc}(N_r(0))$, as follows. For each $\sigma\in N_\delta(0)\cap E^s$ one finds a unique solution of \eqref{mainsystem}, say $\varphi(\cdot,\hat{x})$ with $\varphi(0,\hat{x})=\hat{x}\in\R^2$, which is a fixed point of an operator $T_\sigma$ such that $\pi_s(\hat{x})=\sigma$. Let ${\mathcal S}(\sigma) = \pi_u(\hat{x})$ then $(\sigma, {\mathcal S}(\sigma))=(\pi_s(\hat{x}),\pi_u(\hat{x}))$ will form a graph over $N_\delta(0)\cap E^s$ as $\sigma$ varies, and this graph is the stable manifold $W^s_{loc}(N_r(0))$. We show below that this claim is false for the system \eqref{mainsystem}.

Indeed, since $A$ is diagonal the stable space $E^s$ is easily found to be $E^s=\{(x,0)^{\rm T}: x\in \R\}$, and the projections $\pi_s,\pi_u$ are projections onto coordinate axes, and we can identify the images of the projections of a vector with the coordinates of the vectors.  The relation $\sigma = (\sigma_1,\sigma_2)^{\rT} \in N_\delta(0)\cap E^s$ is equivalent to $\sigma_2=0$ and $|\sigma_1|\leq \delta$.  For one such $\sigma\not= 0$ we construct the operator $T_\sigma$ and find the fixed point $\varphi(\cdot,\hat{x})$ of $T_\sigma$ according to \cite{Deshpande}. A claim of the stable manifold is that the function $\varphi(t,\hat{x})$ tends to 0 as $t\rightarrow\infty$ as the authors of \cite{Deshpande} tried to prove in Step 2 of the proof of their theorem. For the system  \eqref{mainsystem}  we have
\begin{align*}
\pi_u (T_\sigma \varphi(t,\hat{x})) &=-E_\alpha(2 t^\alpha)\int_0^\infty \alpha B(-\tau)[\varphi_1^2(\tau,\hat{x})+\varphi_2^2(\tau,\hat{x})]\;d\tau\\
&+\int_0^t(t-\tau)^{\alpha-1}E_{\alpha,\alpha}(2(t-\tau)^\alpha)[\varphi_1^2(\tau,\hat{x})+\varphi_2^2(\tau,\hat{x})]\;d\tau,\\
\pi_s(T_\sigma\varphi(t,\hat{x})) & =E_\alpha(-2 t^\alpha)\sigma_1+\int_0^t(t-\tau)^{\alpha-1}E_{\alpha,\alpha}(-2(t-\tau)^\alpha)\varphi_1^2(\tau,\hat{x})\;d\tau,
\end{align*}
where $B(-\tau)$ is defined in \cite[Lemma 4, pp.~2439--2440]{Deshpande}, and in this case $B(-\tau)=\frac{1}{\alpha}\exp(-2^{\frac{1}{\alpha}}\tau)$. Thus,
\begin{align*}
\pi_u(T_\sigma\varphi(t,\hat{x})) = & E_\alpha(2 t^\alpha)\Big( \int_0^t\frac{(t-\tau)^{\alpha-1}E_{\alpha,\alpha}(2(t-\tau)^\alpha)}{E_\alpha(2 t^\alpha)}[\varphi_1^2(\tau,\hat{x})+\varphi_2^2(\tau,\hat{x})]\;d\tau\\
& -\int_0^\infty \exp(-2^{\frac{1}{\alpha}}\tau)[\varphi_1^2(\tau,\hat{x})+\varphi_2^2(\tau,\hat{x})]\;d\tau\Big).
\end{align*}

Now, if the function $\varphi(t,\hat{x})$ tends to 0 as $t\rightarrow\infty$ as claimed by the authors of \cite{Deshpande} the function $\varphi_1^2(t,\hat{x})+\varphi_2^2(t,\hat{x})$ is bounded, hence by Cong et al.~\cite[Lemma 8, p.~164]{Cong_2014}, we must have
\begin{align*}
\lim_{t\to \infty} & \int_0^t\frac{(t-\tau)^{\alpha-1}E_{\alpha,\alpha}(2(t-\tau)^\alpha)}{E_\alpha(2 t^\alpha)}[\varphi_1^2(\tau,\hat{x})+\varphi_2^2(\tau,\hat{x})]\;d\tau\\
&\hspace*{2cm}= \; 2^{\frac{1}{\alpha}-1}\int_0^\infty \exp(-2^{\frac{1}{\alpha}}\tau)[\varphi_1^2(\tau,\hat{x})+\varphi_2^2(\tau,\hat{x})]\;d\tau.
\end{align*}
Taking into account that $\hat{x}\not=0$, this implies
\begin{align*}
\lim_{t\to\infty}\pi_u(T_\sigma\varphi(t,\hat{x}))=&\\
&\hspace*{-3cm}=\; \Big((2^{\frac{1}{\alpha}-1}-1)\int_0^\infty \exp(-2^{\frac{1}{\alpha}}\tau)[\varphi_1^2(\tau,\hat{x})+\varphi_2^2(\tau,\hat{x})]\;d\tau\Big)
  \lim_{t\to \infty}E_\alpha(2 t^\alpha)\\
& \hspace*{-3cm}=\;\infty,
\end{align*}
what is a contradiction to the claim that $\varphi(t,\hat{x})$ tends to 0 as $t\rightarrow\infty$.

In our opinion, the flaws of the paper \cite{Deshpande} come from the fact that the authors of \cite{Deshpande} used a false representation of the Mittag-Leffler function $E_{\alpha,\beta}(t^\alpha J)$ as they claim the following equality (see \cite[Lemma 4(2), p.~2440]{Deshpande}):
\begin{equation}\label{Mittag-Leffler}
E_{\alpha,\alpha}(t^\alpha J)=t^{1-\alpha}B(t)+C(t),
\end{equation}
where $J$ is the Jordan block corresponding to a complex number $\lambda$ with $|\arg(\lambda)|<\frac{\alpha\pi}{2}$, $B(t)$ and $C(t)$ are defined in \cite[Lemma 4, p.~ 2440]{Deshpande}. To show that the representation they claimed is not true let us consider a matrix $J$ as below:
 \[
 J=   \left(
      \begin{array}{*5{c}}
       \lambda    &     1 \\
     0      & \lambda   \\
      \end{array}
    \right),
\] 
here $0 < \lambda \in \R$, hence $|\arg(\lambda)|=0 <\frac{\alpha\pi}{2}$. Since
\[
E_{\alpha,\alpha}(t^p J)=   \left(
      \begin{array}{*5{c}}
       E_{\alpha,\alpha}(\lambda t^\alpha)    &     \frac{d}{d\lambda} E_{\alpha,\alpha}(\lambda t^\alpha) \\
     0      & E_{\alpha,\alpha}(\lambda t^\alpha)   \\
      \end{array}
    \right),
\] 
by using the integral formula of the Mittag-Leffler function $E_{\alpha,\alpha}(\lambda t^\alpha)$ and the estimates as in Podlubny~\cite[Theorem 1.3, p.~32]{Podlubny}, we can write $E_{\alpha,\alpha}(t^\alpha J)$ in the form:
\begin{equation}\label{assert1}
      \begin{pmatrix}
       \frac{1}{\alpha}\lambda^{\frac{1-\alpha}{\alpha}}t^{1-\alpha}\exp(\lambda^{\frac{1}{\alpha}}t)    &     \frac{1}{\alpha^2}\lambda^{\frac{1-2\alpha}{\alpha}}t^{1-\alpha}\exp(\lambda^{\frac{1}{\alpha}}t)(1-\alpha+t\lambda^{\frac{1}{\alpha}}) \\[10pt]
     0      & \frac{1}{\alpha}\lambda^{\frac{1-\alpha}{\alpha}}t^{1-\alpha}\exp(\lambda^{\frac{1}{\alpha}}t)\\[1.5ex]
      \end{pmatrix}
+C(t),
\end{equation}
here $C(t)$ is the  same as the matrix $C(t)$ defined in \cite[Lemma 4]{Deshpande}.
On the other hand, we can now compute the matrix $B(t)$ in \cite[Lemma 4]{Deshpande} as
\[
B(t)=   \left(
      \begin{array}{*5{c}}
       \frac{1}{\alpha}\exp(\lambda^{\frac{1}{\alpha}}t)    &     \frac{1}{\alpha^2}\lambda^{\frac{1-\alpha}{\alpha}}t\exp(\lambda^{\frac{1}{\alpha}}t) \\
     0      & \exp(\lambda^{\frac{1}{\alpha}}t)   \\
      \end{array}
    \right),
\] 
this together with \eqref{assert1} imply that 
\[
E_{\alpha,\alpha}(t^\alpha J)\neq t^{1-\alpha}B(t)+C(t).
\]
Thus the claim \eqref{Mittag-Leffler} of \cite{Deshpande} is false. This leads to false assertions in Lemma~4(2) and Lemma~8(2) of \cite{Deshpande}, which are main tools in the proof of the stable manifold theorem in \cite{Deshpande}.
Note that if the Jordan block $J$ has higher dimension then the relation between $E_{\alpha,\alpha}(t^\alpha J)$, $B(t)$ and $C(t)$ is even more complicated and one cannot expect a simple relation similar to (the false) equality \eqref{Mittag-Leffler}. As a consequence, the approach to the problem of stable manifolds in  A.\ Deshpande and V.\ Daftardar-Gejji~\cite{Deshpande} is not sound.

\section*{Acknowledgement}
This research of the first, the second and the fourth author is funded by the Vietnam National Foundation for
Science and Technology Development (NAFOSTED) under Grant Number 101.03-2014.42.


\begin{thebibliography}{12}
%
%
\bibitem{Bonilla}
B.~Bonilla, M.~Rivero and J.J.~Trujillo.
\newblock{ On systems of linear fractional differential equations
with constant coefficients.}
\newblock{\em Applied Mathematics and Computation} {\bf 187} (2007), 68--78.
%
\bibitem{Cong_FCAA}
N.D.~Cong, T.S.~Doan and H.T.~Tuan.
\newblock{On fractional Lyapunov exponent for solutions of linear fractional differential equations}.
\newblock{\em Fract. Calc. Appl. Anal.} \textbf{17} (2014), no.~2, 285–-306.
%
\bibitem{Cong_2014}
N.D.~Cong, T.S.~Doan, S.~Siegmund and H.T.~Tuan.
\newblock{ On stable manifolds for planar fractional differential equations.}
\newblock{\em Applied Mathematics and Computation} {\bf 226} (2014), 157-168.

\bibitem{CongSonSiegmundTuan}
N.D.~Cong, T.S.~Doan, S.~Siegmund and H.T.~Tuan.
\newblock{Structure of the fractional Lyapunov spectrum for linear fractional differential equations.}
\newblock{\em Advances in Dynamical Systems and Applications} \textbf{ 9} (2014), 149--159.
%
\bibitem{CongSonSiegmundTuan1}
N.D.~Cong, T.S.~Doan, S.~Siegmund and H.T.~Tuan.
\newblock{On stable manifold for fractional differential equations in high dimensional space.}
\newblock{\em Institute of Mathematics, Vietnam Academy of Science and Technology, Preprint IMH2015/03/01, March 2015.} http://math.ac.vn/en/component/staff/?task=showPrePrint\&year=2015

%
\bibitem{Deshpande}
A.~Deshpande and V.~Daftardar-Gejji.
\newblock{Local stable manifold theorem for fractional systems.}
\newblock{\em Nonlinear Dynamics} \textbf{83} (2016), 2435--2452.
\bibitem{Deng}
W.~Deng.
\newblock{ Smoothness and stability of the solutions for nonlinear fractional differential equations.}
\newblock{\em Nonlinear Anal.} {\bf 72} (2010), no. 3-4, 1768--1777.
%
\bibitem{Kai}
K.~Diethelm.
\newblock{The analysis of fractional differential equations. An application-oriented exposition using differential operators of Caputo type.}
\newblock{\em Lecture Notes in Mathematics} {\bf 2004}.
\newblock{Springer-Verlag, Berlin, 2010.}
%
%
\bibitem{Kilbas}
A.A.~Kilbas, H.M.~Srivastava and J.J.~Trujillo.
\newblock{\em Theory and Applications of Fractional Differential Equations.}
\newblock{ North-Holland Mathematics Studies, \textbf{204}.}
\newblock{Elsevier Science B.V., Amsterdam, 2006.}
%
%
\bibitem{Lin}
W.~Lin.
\newblock{ Global existence theory and chaos control of fractional
differential equations.}
\newblock{\em J. Math. Anal. Appl. }{\bf 332} (2007), 709--726.
%
\bibitem{LiGong}
C.P.~Li, Z.~Gong, D.~Qian and Y.~Chen.
 \newblock{On the bound of the Lyapunov exponents for the fractional differential systems}.
 \newblock{\em Chaos} \textbf{20} (2010), no.~1, 013127, 7 pages.

\bibitem{LiMa}
C.P.~Li and  Y.~Ma.
\newblock{Fractional dynamical system and its linearization theorem}.
\newblock{\em Nonlinear Dynam.} \textbf{71} (2013), no.~4, 621–-633.
\bibitem{LiZhang}
C.P.~Li and F.R.~Zhang.
\newblock{A survey on the stability of fractional differential equations}.
\newblock{\em The European Physical Journal Special Topics} \textbf{193} (2011), no.~1, 27--47.
%
\bibitem{LiPeng}
K.~Li and J.~Peng.
\newblock{Laplace transform and fractional differential equations.}
\newblock{\em Applied Mathematics Letters} {\bf 24} (2011), 2019--2023.
%
\bibitem{LiChen}
Y.~Li, Y.~Chen and I.~Podlubny.
\newblock{Mittag-Leffler stability of fractional order nonlinear dynamic systems}.
\newblock{\em Automatica J. IFAC} \textbf{45} (2009), no.~8, 1965–-1969.
%
\bibitem{Odibat}
Z.M.~Odibat.
\newblock{ Analytic study on linear systems of fractional differential equations.}
\newblock{\em Computers and Mathematics with Applications} {\bf 59} (2010), 1171--1183.
%
%
\bibitem{Podlubny}
I.~Podlubny.
\newblock{\em Fractional Differential Equations. An Introduction to Fractional Derivatives, Fractional Differential Equations, to Methods of their Solution and some of their Applications.}
\newblock{ Mathematics in Science and Engineering, \textbf{198}.}
\newblock{ Academic Press, Inc., San Diego, CA, 1999}.
%
\bibitem{Shilov}
G.E.~Shilov.
\newblock{\em Linear Algebra.}
\newblock{ Dover Publications, Inc., New York, 1977}.
\end{thebibliography}
\end{document}